\numberwithin{equation}{section}
\numberwithin{figure}{section}
\theoremstyle{plain}
\newtheorem{thm}{\protect\theoremname}
\theoremstyle{plain}
\newtheorem{lem}[thm]{\protect\lemmaname}
\theoremstyle{remark}
\theoremstyle{plain}
\newtheorem{cor}[thm]{\protect\corollaryname}
\let\myFoot\footnote
\renewcommand{\footnote}[1]{\myFoot{#1\vspace{0.8mm}}}
\providecommand{\corollaryname}{Corollary}
\providecommand{\lemmaname}{Lemma}
\providecommand{\remarkname}{Remark}
\providecommand{\theoremname}{Theorem}
\begin{document}
\title{A variant of Cauchy's argument principle for analytic functions which applies to curves containing zeroes}
\author{Maher Boudabra, Greg Markowsky}
\maketitle
\begin{abstract}
    It is known that the Cauchy's argument principle, applied to an holomorphic function $f$, requires that $f$ has no zeros on the curve of integration. In this short note, we give a generalization of such a principle which covers the case when $f$ has zeros on the curve, as well as an application.
\end{abstract}

\section{Introduction, statement of results, and an application}

The argument principle, one of the fundamental results in complex analysis, can be formulated as follows (see \cite{Rudin2001} or \cite{remmert2012theory}).
\begin{thm} \label{classarg}
Suppose that $\gamma$ is a smooth Jordan curve in a domain $U$, and a function $f$ is analytic on $U$ with no zeroes on $\gamma$. Then the number of zeroes of $f$ inside $\gamma$ is equal to 

\begin{equation} \label{fprime}
\frac{1}{2 \pi i} \int_\gamma \frac{f'(z)}{f(z)}dz.
\end{equation}

This is also equal to the winding number of the curve $f(\gamma)$ around $0$.
\end{thm}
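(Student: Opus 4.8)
The plan is to deduce Theorem~\ref{classarg} from the residue theorem together with a couple of standard facts about winding numbers. First I would note that $f$ is not identically zero, since it is nonvanishing on $\gamma$; hence, by the identity theorem, its zero set is discrete in $U$. Because $\gamma$ together with its interior (which we take to lie in $U$) is compact, $f$ has only finitely many zeroes inside $\gamma$, say $a_1,\dots,a_n$ with multiplicities $m_1,\dots,m_n$; throughout, ``number of zeroes'' is counted with multiplicity, so the quantity to be identified is $m_1+\dots+m_n$.

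Next I would examine the integrand locally. Near each $a_k$ one can write $f(z)=(z-a_k)^{m_k}g_k(z)$ with $g_k$ analytic and nonvanishing there, so logarithmic differentiation gives
\[
\frac{f'(z)}{f(z)}=\frac{m_k}{z-a_k}+\frac{g_k'(z)}{g_k(z)} .
\]
Thus $f'/f$ is meromorphic on $U$ with a simple pole of residue $m_k$ at each $a_k$ and no other singularity on or inside $\gamma$. Applying the residue theorem to the positively oriented Jordan curve $\gamma$ then gives
\[
\frac{1}{2\pi i}\int_\gamma\frac{f'(z)}{f(z)}\,dz=\sum_{k=1}^{n} n(\gamma,a_k)\,m_k ,
\]
and since a positively oriented Jordan curve has winding number $1$ about each of its interior points, the right-hand side equals $m_1+\dots+m_n$, which is the first assertion.

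For the statement about the winding number of $f(\gamma)$, I would change variables by $w=f(z)$. As $f$ has no zeroes on $\gamma$, the image $\sigma:=f\circ\gamma$ is a piecewise-smooth closed curve in $\mathbb{C}\setminus\{0\}$, and the chain rule yields
\[
\frac{1}{2\pi i}\int_\gamma\frac{f'(z)}{f(z)}\,dz=\frac{1}{2\pi i}\int_\sigma\frac{dw}{w}=n(\sigma,0) ,
\]
which is by definition the winding number of $f(\gamma)$ about the origin, completing the proof.

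The local factorization and the substitution are entirely routine. The one ingredient that genuinely requires external input — and that I would cite rather than reprove — is the topology behind the phrase ``inside $\gamma$'': the Jordan curve theorem, ensuring the interior is well defined, and the fact that a positively oriented smooth Jordan curve has winding number $1$ about interior points and $0$ about exterior points. One could equally well avoid the homological form of the residue theorem by factoring $f$ as a product of the $(z-a_k)^{m_k}$ times a zero-free analytic factor $g$, checking that $\int_\gamma g'/g\,dz=0$, and computing each $\int_\gamma dz/(z-a_k)$ directly; but this variant relies on the same winding-number facts and is no shorter.
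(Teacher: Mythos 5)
Your proof is correct and is exactly the standard argument: the paper does not prove Theorem~\ref{classarg} itself but cites it from \cite{Rudin2001} and \cite{remmert2012theory}, where it is established in the same way, via the local factorization $f(z)=(z-a_k)^{m_k}g_k(z)$, the residue theorem, and the substitution $w=f(z)$ for the winding-number statement. No gaps beyond the standard Jordan-curve/winding-number facts you already flag as citations.
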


The equation (\ref{fprime}) shows the importance of the condition that $f$ not vanish on $\gamma$, since otherwise the integral would diverge, but it is the final statement that we will focus on. We have found the following variant on this statement which holds without the requirement that $f$ be nonzero on $\gamma$.

\begin{thm}
\label{thm:Let--be} Let $f$ be a non zero holomorphic function defined
on an open domain $U$ and $\gamma$ be a smooth Jordan curve lying
inside $U$. Then for any line $L$ passing through the origin there
exist at least $2m+\lambda$ distinct points on $\gamma$ mapped to $L$ by $f$, where
$m$ is the number of zeros of $f$ inside $\gamma$ and $\lambda$
is the number of zeros of $f$ on $\gamma$, all counted according to
multiplicities.
\end{thm}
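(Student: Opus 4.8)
The plan is to reduce everything to the classical winding-number count. First, if $f^{-1}(L)\cap\gamma$ is infinite the conclusion is immediate, so I assume throughout that the set $P$ of points of $\gamma$ mapped into $L$ by $f$ is finite. Write $L\setminus\{0\}=\{z:\arg z\equiv\theta_0\ (\mathrm{mod}\ \pi)\}$, so that $\gamma(t)\in f^{-1}(L)$ exactly when $\arg(e^{-i\theta_0}f(\gamma(t)))\in\pi\mathbb Z$.

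I would first treat the case $\lambda=0$, which is the heart of the matter. If $m=0$ there is nothing to prove, so let $m\geq1$. Since $f\neq0$ on $\gamma$, the loop $f\circ\gamma$ lies in $\mathbb C^\ast$ and admits a continuous argument $\phi\colon[0,1]\to\mathbb R$; by Theorem~\ref{classarg} the winding number of $f(\gamma)$ about $0$ is $m$, so $\phi(1)-\phi(0)=2\pi m$. A loop contained in $L\setminus\{0\}$ would have winding number $0$, hence $f(\gamma)\not\subseteq L$, and by choosing the base point of $\gamma$ outside $f^{-1}(L)$ I may assume $\phi(0)\notin\pi\mathbb Z$. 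The open interval with endpoints $\phi(0)$ and $\phi(0)+2\pi m$ then contains exactly $2m$ integer multiples of $\pi$; by the intermediate value theorem $\phi$ attains each of them, and since $\gamma$ is a Jordan curve the corresponding $2m$ parameters give $2m$ distinct points of $\gamma$ in $f^{-1}(L)$.

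For the general case, let $z_1,\dots,z_p$ be the distinct zeros of $f$ on $\gamma$, of multiplicities $\lambda_1,\dots,\lambda_p$ summing to $\lambda$. I would modify $\gamma$ by replacing, near each $z_i$, the short sub-arc $I_i$ cut off by the circle $|z-z_i|=\delta$ with the arc $D_i$ of that circle lying in the exterior of $\gamma$, producing a Jordan curve $\gamma_\delta$. For $\delta$ small, and chosen so that the endpoints of the $D_i$ stay off $f^{-1}(L)$, the curve $\gamma_\delta$ carries no zero of $f$ and encloses exactly $m+\lambda$ of them counted with multiplicity (the $m$ original interior zeros together with each $z_i$). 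Applying the case $\lambda=0$ to $\gamma_\delta$ yields at least $2(m+\lambda)$ distinct points of $\gamma_\delta$ mapped into $L$. Those lying on the untouched part $\gamma\setminus\bigcup_i I_i$ are precisely the points of $P$ other than the $z_i$, i.e.\ $|P|-p$ of them; the remaining ones lie on the $D_i$. On $D_i$, writing $f(z)=(z-z_i)^{\lambda_i}h(z)$ with $h(z_i)\neq0$, one finds $\arg f$ strictly monotone with total variation $\lambda_i\pi+O(\delta)<(\lambda_i+1)\pi$ for small $\delta$, so $D_i$ contributes at most $\lambda_i+1$ points. Hence $2(m+\lambda)\leq(|P|-p)+\sum_i(\lambda_i+1)=|P|+\lambda$, giving $|P|\geq2m+\lambda$.

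The step I expect to be most delicate is the bookkeeping in the last paragraph: one must verify that $\gamma_\delta$ is genuinely a simple closed curve enclosing exactly the stated zeros, that no point of $f^{-1}(L)\cap\gamma_\delta$ is double counted across the junctions between the $D_i$ and $\gamma$ (which is why the junctions are kept off $f^{-1}(L)$), and, crucially, that the gain of $p$ from excising the arcs $I_i$ exactly cancels the worst-case surplus of $1$ per detour in the estimate $\lambda_i+1$, so that the final inequality closes to $2m+\lambda$ rather than to something weaker. Establishing the monotonicity and the $O(\delta)$ bound for $\arg f$ along $D_i$ is routine once $f$ is written in the local normal form above.
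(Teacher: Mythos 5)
Your proposal is correct and takes essentially the same route as the paper: you replace $\gamma$ by outward circular detours around the boundary zeros, apply the classical argument principle to the modified Jordan curve, and bound the number of $L$-preimages on each detour arc by $\lambda_i+1$ using the local factorization $f(z)=(z-z_i)^{\lambda_i}h(z)$ and the near-monotonicity of $\arg f$ there. Your final bookkeeping, $2(m+\lambda)\le(|P|-p)+\sum_i(\lambda_i+1)$, is just a repackaged version of the paper's count, in which the $z_j$ themselves are added at the end as preimages of $0\in L$.
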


We note that Theorem \ref{thm:Let--be} is an immediate consequence of the Theorem \ref{classarg} when there are no zeroes on $\gamma$, since any line will be intersected at least twice by $f(\gamma)$ each time that $f(\gamma)$ winds around $0$. Naturally the difficulty arises when there are zeroes on $\gamma$. A first attempt may be to factor out the zeroes, for instance writing $f(z)=g(z)h(z)$, where $g(z)$ is a polynomial with zeroes only on $\gamma$ and $h$ does not vanish on $\gamma$, and then to try to apply Theorem \ref{classarg} to $h$ and add the zeroes from $g$. However this approach fails, as the intersections of $f(\gamma)$ with $L$ are in general not respected by the factorization, and in any event zeroes of $g$ on $\gamma$ of high order must contribute many intersections of $f(\gamma)$ with $L$.

We will prove Theorem \ref{thm:Let--be} in the next section, but first we discuss the motivation for this result, in particular the application which inspired it. We thank Mohammed Zerrak for bringing this problem to our attention.

{\bf Problem:} Let $a_0, \ldots ,a_n$ be a sequence of real numbers, and for simplicity assume $a_0, a_n \neq 0$ (this requirement can easily be removed if required). Let $P(\theta) = \sum_{j=0}^n a_j \cos(j \theta)$ and $Q(\theta) = \sum_{j=0}^n a_{n-j} \cos(j \theta)$. Let $Z_P$ be the number of $\theta \in [0,2\pi)$ such that $P(\theta) = 0$, and $Z_Q$ defined analogously. Show that $Z_P + Z_Q \geq 2n$.

A quick solution using complex analysis can be provided, as follows. Let $f(z) = \sum_{j=0}^n a_j z^j$. Then $P(\theta) = \Re(f(e^{i\theta}))$, and it may be checked that $Q(\theta) = \Re(g(e^{i\theta}))$, where $g(z) = z^n f(\frac{1}{z})$. If there are no zeroes of $f$ on the unit circle $\{|z|=1\}$, then by Theorem \ref{classarg} the curve $\Re(f(e^{i\theta}))$ will intersect the imaginary axis at least $2m_f$ times, where $m_f$ is the number of zeroes (counting multiplicities) of $f$ inside the unit disk $\{|z|<1\}$, and the analogous statement holds for $g$. We see that $Z_P + Z_Q \geq 2(m_f + m_g)$. However, the number of zeroes of $g$ in $\{|z|<1\}$ is the same as the number of zeroes of $f$ in  $\{|z|>1\}$, and since $f$ has no zeroes on the unit circle we see that $m_f+m_g = n$. The result therefore follows in this case.

The role of Theorem \ref{thm:Let--be} is to extend this solution in the case that $f$ has zeroes on the unit circle. Let $\lambda$ denote the sum of the multiplicities of the zeroes of $f$ on $\{|z|=1\}$, and note that the conjugates of these zeroes must be zeroes of $g$ with the same multiplicities. Applying Theorem \ref{thm:Let--be}, we see that $Z_P+Z_Q = 2m_f + \lambda + 2m_g + \lambda$, and as $m_f + m_g + \lambda = n$ the result follows.

As a final comment before proving Theorem \ref{thm:Let--be}, we point out that the number $2m+\lambda$ is the best possible, as the following examples show. Let $\gamma$ be the unit circle, and let $f(z) = (z+1)^n$. Then, taking $z = e^{i \theta}$, we can check that $(e^{i \theta} + 1)^n = 2^n e^{in\theta/2} \cos^n(\theta/2)$, so the number of intersections that $f(\gamma)$ has with the real axis is the same as the number of zeroes of $\Im(e^{in\theta/2})=\sin(n\theta/2)$ in $[0,2\pi)$, and this is $n$. This shows essentially that the $\lambda$ in the expression $2m + \lambda$ is sharp. The $2m$ is even easier, as we may take $\gamma$ again as the unit circle and $f(z) = z^n$, and $f(e^{i\theta})$ will intersect any line exactly $2n$ times as $\theta$ ranges from $0$ to $2\pi$.

\iffalse
\begin{cor}
If $f$ is a polynomial of degree $n$ then $f(z)$ and $z^{n}f(\frac{1}{z})$
together cross any line $L$through the origin more than $2n$ times
when $z$ ranges over the unit circle.
\end{cor}

\begin{proof}
Set $h(z)=z^{n}f(\frac{1}{z})$ and consider the following numbers
\[
\begin{alignedat}{1}m_{f} & :=\#\{z\in\mathbb{D}\mid f(z)=0\}\\
\lambda & :=\#\{z\in\partial\mathbb{D}\mid f(z)=0\}=\#\{z\in\partial\mathbb{D}\mid g(z)=0\}\\
m_{g} & :=\#\{z\in\mathbb{D}\mid f(z)=0\}
\end{alignedat}
\]
In particular $m_{f}+\lambda+m_{g}=n$. Using (\ref{thm:Let--be}),
the number of crossings for both $f$ and $g$ with $L$ is then bounded below
by 
\[
2m_{f}+\lambda+2m_{g}+\lambda=2n.
\]
\end{proof}

\fi

\section{Proof of Theorem \ref{thm:Let--be}}
\begin{proof}
The number of zeros of $f$ on and inside $\gamma$ is finite as the
latter is a Jordan curve. Let $z_{1},...,z_{k}$ be the roots of $f$
on $\gamma$, where we denote by $\lambda_{j}$ the multiplicity of
each $z_{j}$ and $D_{j}=D_j(\varepsilon)$ be the disc of radius $\varepsilon$ 
centered at $z_{j}$. The radius $\varepsilon$ is chosen small enough
so $D_{j}$ remains inside $U$ and contains no zeroes of $f$ other than $z_j$. Now consider the Jordan curve $\gamma_{\varepsilon}$
constructed from $\gamma$ by replacing each part $\gamma\cap D_{j}$
by the arc of the circle $\partial D_{j}$ lying outside of $\gamma$; in order to guarantee that $\gamma_\varepsilon$ is itself a Jordan curve we may need to decrease $\varepsilon$ further. The figure illustrates $\gamma_\varepsilon$.

\begin{figure}[H]
\centering{}~~~~~~~~~~~~~~\includegraphics[width=7cm,height=7cm,keepaspectratio]{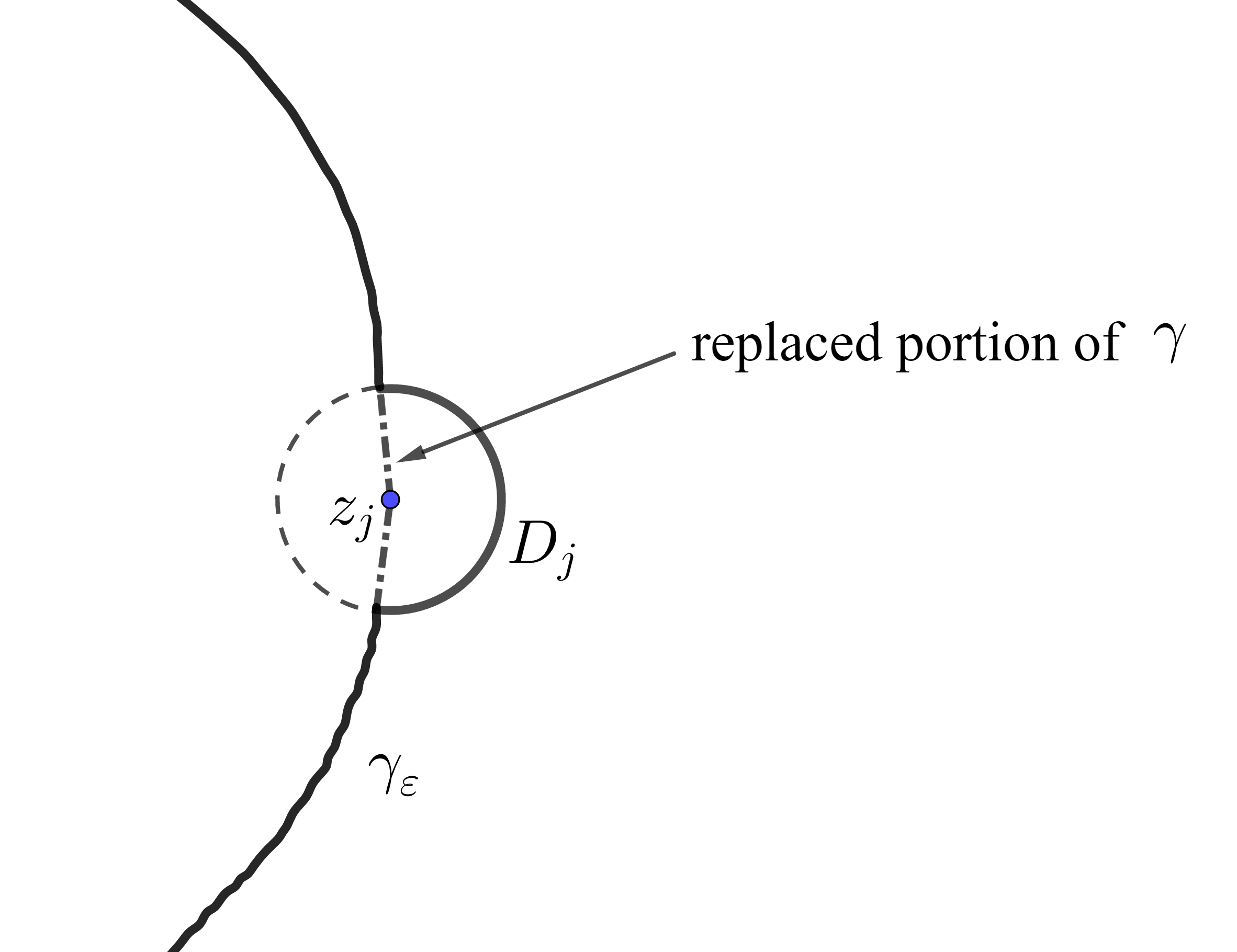}\caption{The bold curve is $\gamma_{\varepsilon}$. }
\end{figure}

No zeroes of $f$ lie on $\gamma_\varepsilon$, and there are $m+\lambda$ zeroes of $f$ inside $\gamma_\varepsilon$, so by Theorem \ref{classarg} there are at least $2m+2\lambda$ points on $\gamma_\varepsilon$ which are preimages of points in $L$. The trick is to show that not too many of them can be on the components of $\gamma_\varepsilon$ which lie in the boundary of some $\partial D_{j}$. The intuition behind this is easy: if we shrink $\varepsilon$ sufficiently, then the points on $\partial D_{j}$ which are preimages of points on $L$ should be approximately equidistributed on $\partial D_{j}$, so that about half of them lie inside and half lie outside $\gamma$. Only the ones on the outside of $\gamma$ lie on $\gamma_\varepsilon$, and any other points on $\gamma_\varepsilon$ which are preimages of points in $L$ must lie on $\gamma$ proper. The trick is making this rigorous.

Inside $\overline{D}_{j}$, $f$ is of the form $f(z)=(z-z_{j})^{\lambda_{j}}g_{j}(z)$
where $g_{j}(z_{j})\neq0$. If we let $z=z_{j}+\varepsilon e^{\theta i}$
then we have 
\[
\arg(f(z_{j}+\varepsilon e^{\theta i}))=\lambda_{j}\theta+\arg(g_{j}(z)).
\]

We claim that 
\begin{equation}
\mid{\textstyle \frac{d\arg(g_{j})}{d\theta}}(z_{j}+\varepsilon e^{\theta i})\mid\overset{\varepsilon}{=}o(1).\label{eq:o(eps)}
\end{equation}
Note that this implies from above that 

\begin{equation}
\mid{\textstyle \frac{d\arg(f)}{d\theta}}(z_{j}+\varepsilon e^{\theta i})\mid\overset{\varepsilon}{=} \lambda_j  + o(1).\label{eq:fo(eps)}
\end{equation}

We shall use the approximation (\ref{eq:fo(eps)}), and will prove (\ref{eq:o(eps)})
later on in a separate lemma. Now, we may shrink $\varepsilon$
if necessary so that $\frac{d\arg(f)}{d\theta}(z_{j}+\varepsilon e^{\theta i})$ is positive on all $\partial D_{j}$.
This fact, combined with Theorem \ref{classarg}, yields
that $\partial D_{j}$ contains exactly $2\lambda_{j}$ preimages of
points in $L$, since the argument of the curve may not change direction in order to create extra preimages. In other
words,
\[
\#\{z\in\partial D_{j}\mid f(z)\in L\}=2\lambda_{j}.
\]
Denote by $u_{0}=z_{j}+re^{\theta_{0}i},...,u_{\ell}=z_{j}+re^{\theta_{\ell}i}$
the points of $\gamma_{\varepsilon}\cap\partial D_{j}$ such that
$f(u_{t})\in L$, arranged in anti-clockwise order. Note that $\arg(f(u_{t}))-\arg(f(u_{t-1})) = \pi$ for all $t$, since $f$ is continuous and orientation preserving.  Using
the mean value theorem and (\ref{eq:fo(eps)}) we have 
\[
\begin{vmatrix}\frac{\arg(f(u_{t}))-\arg(f(u_{t-1}))}{\theta_{t}-\theta_{t-1}}\end{vmatrix}={\textstyle \frac{\pi}{\theta_{t}-\theta_{t-1}}}\leq\lambda_{j}+o(1),
\]
whence 
\[
\ell\leq \sum_{t=1}^\ell {\textstyle \frac{(\theta_{t}-\theta_{t-1})(\lambda_{j}+o(1 ))}{\pi}}=\textstyle \frac{(\lambda_{j}+o(1 ))}{\pi} (\theta_{\ell}-\theta_{0})\leq{\textstyle \frac{\lambda_{j}+\varepsilon\delta_{j}}{\pi}}(\pi+\eta_{\varepsilon}),
\]
where $\pi + \eta_{\varepsilon}$ denotes the angular length of the circular arc $\partial D_{j} \cap\gamma_\varepsilon$. Note that the smoothness of $\gamma$ implies that $\eta_{\varepsilon}\rightarrow0$ as $\varepsilon \to 0$.

\begin{figure}[H]
\begin{centering}
\includegraphics[width=6cm,height=6cm,keepaspectratio]{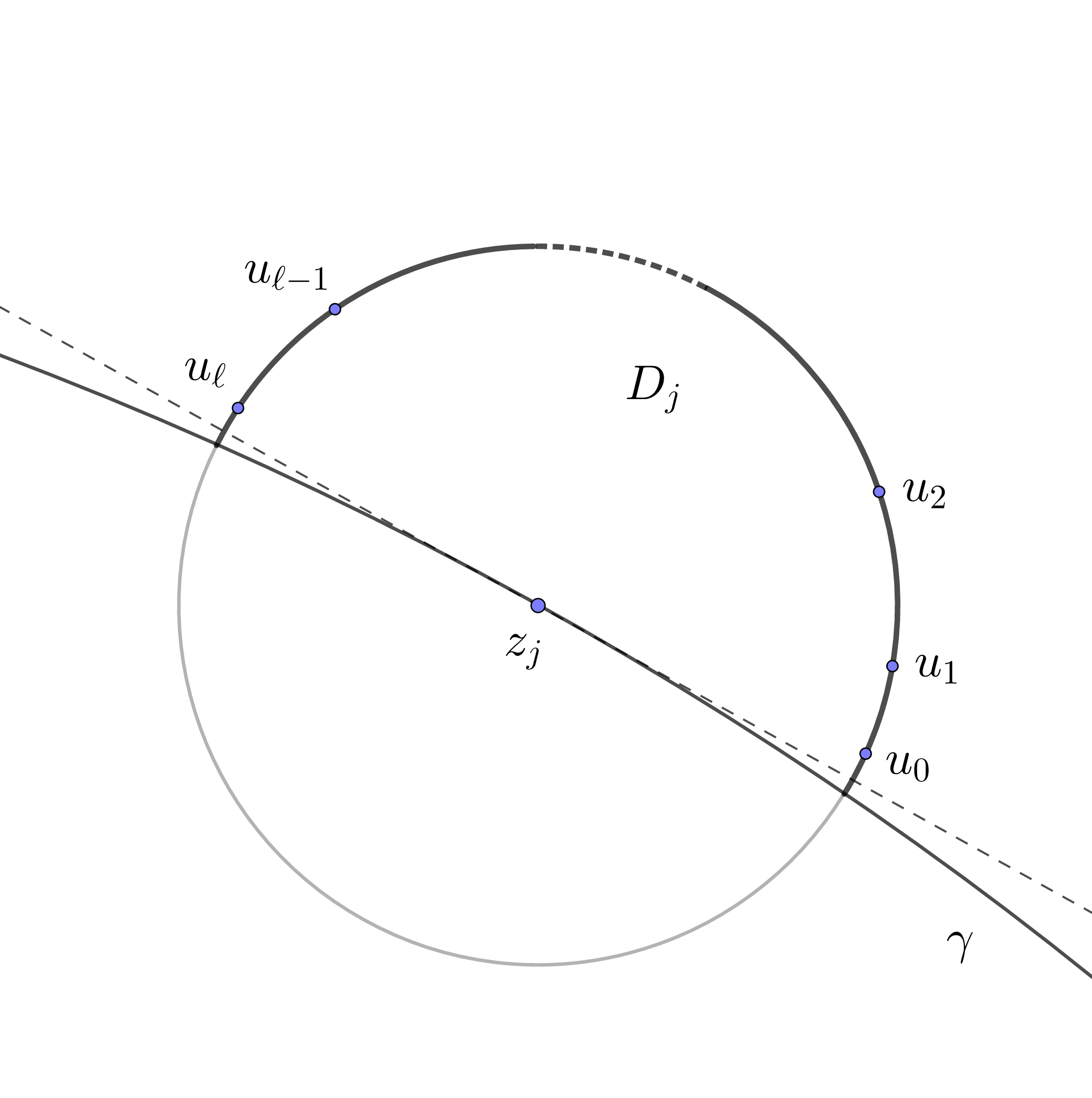}\caption{The dotted line is the tangent of $\gamma$ at $z_{j}$.}
\par\end{centering}
\end{figure}

Thus, by again shrinking $\varepsilon$ if necessary, we obtain 
\[
\ell\leq\lambda_{j}.
\]

There are $\ell+1$ points on $\partial D_{j} \cap\gamma_\varepsilon$ which are preimages of points in $L$, and applying this around every $z_j$ and combining these estimates yields at least

$$
2m+2\lambda - \sum_{j=1}^k (\lambda_j+1)
$$

points on $\gamma \cap \gamma_\varepsilon$ which are preimages of points in $L$. Recalling that $\lambda = \sum_{j=1}^k \lambda_j$ gives at least 

$$
2m+ \sum_{j=1}^k (\lambda_j-1)
$$

points on $\gamma \cap \gamma_\varepsilon$ which are preimages of points in $L$. Finally, the points $z_1, \ldots, z_k$ are all mapped to $L$ by $f$, since $L$ passes through the origin, and taking these into account completes the proof of the theorem.

\iffalse
there must Consequently, inside the curve $\gamma$, at least $\lambda_{j}-1$
points of $\partial D_{j}$ and are mapped to $L$. Summarizing all
contributions made by the $z_{j}$'s, the total number of points on
and inside $\gamma$ is bounded below by 
\[
\underset{\overset{\uparrow}{\mathrm{CAP}\,\mathrm{inside}\,\gamma}}{2m}+\sum_{j=1}^{k}(\lambda_{j}-1+\underset{\overset{\uparrow}{z_{j}}}{1})=2m+\sum_{j=1}^{k}\lambda_{j}=2m+\lambda
\]
\fi
\end{proof}

It remains only to prove the lemma used earlier on the derivative of the argument of $g$.

\begin{lem}
We have 
\[
\mid{\textstyle \frac{d\arg(g_{j})}{d\theta}}(z_{j}+\varepsilon e^{\theta i})\mid\overset{\varepsilon}{=}o(1).
\]
\end{lem}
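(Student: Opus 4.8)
The plan is to express $\arg(g_j)$ along the circle $\partial D_j(\varepsilon)$ in terms of a contour-type integral and then bound its $\theta$-derivative directly. Recall that on $\overline{D}_j$ we have $f(z) = (z-z_j)^{\lambda_j} g_j(z)$ with $g_j$ holomorphic and $g_j(z_j) \neq 0$; by shrinking $\varepsilon$ we may assume $g_j$ is nonvanishing on all of $\overline{D}_j$. Hence $\log g_j$ admits a single-valued holomorphic branch on a neighbourhood of $\overline{D}_j$, and $\arg(g_j(z)) = \Im(\log g_j(z))$. Writing $z = z(\theta) = z_j + \varepsilon e^{i\theta}$, so that $z'(\theta) = i\varepsilon e^{i\theta}$, the chain rule gives
\[
\frac{d}{d\theta}\arg(g_j(z(\theta))) = \Im\!\left( \frac{g_j'(z(\theta))}{g_j(z(\theta))}\, i\varepsilon e^{i\theta} \right).
\]

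The key step is then a uniform bound on $g_j'/g_j$ on $\overline{D}_j(\varepsilon)$. First I would fix some $\varepsilon_0$ small enough that $\overline{D}_j(\varepsilon_0) \subset U$ and $g_j$ is nonvanishing there; since $g_j'/g_j$ is continuous on the compact set $\overline{D}_j(\varepsilon_0)$, it is bounded there by some constant $M_j$ independent of $\varepsilon \le \varepsilon_0$. Taking absolute values in the displayed identity and using $|\Im(w)| \le |w|$ yields
\[
\left| \frac{d}{d\theta}\arg(g_j(z_j + \varepsilon e^{i\theta})) \right| \le \varepsilon\, \left| \frac{g_j'(z_j + \varepsilon e^{i\theta})}{g_j(z_j + \varepsilon e^{i\theta})} \right| \le M_j \varepsilon,
\]
which is $o(1)$ as $\varepsilon \to 0$, uniformly in $\theta$. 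This is exactly the claimed estimate, and in fact it shows the quantity is $O(\varepsilon)$, consistent with the $\varepsilon \delta_j$ notation used in the main proof.

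I do not anticipate a genuine obstacle here; the only point requiring a little care is the justification that $\arg(g_j)$ has a well-defined, differentiable branch along the curve, which is where the nonvanishing of $g_j$ on the closed disc (guaranteed by the choice of $\varepsilon$ in the main proof) is used. One should also note that the branch of $\arg$ can be chosen consistently with the one implicit in the decomposition $\arg(f(z_j + \varepsilon e^{i\theta})) = \lambda_j \theta + \arg(g_j(z))$ from the main proof, so that differentiating that relation term by term is legitimate; this is immediate once a holomorphic branch of $\log g_j$ is fixed. If one prefers to avoid invoking a branch of the logarithm, an alternative is to write $\arg(g_j) = \Im \log g_j$ only locally and patch, or simply to differentiate $\arg(g_j(z)) = \operatorname{Im}\log g_j(z)$ using that $\tfrac{d}{d\theta}\log g_j(z(\theta)) = \tfrac{g_j'(z(\theta))}{g_j(z(\theta))} z'(\theta)$ holds for any local branch and the imaginary part is branch-independent.
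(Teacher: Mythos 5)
Your proof is correct, and it takes a genuinely different (and arguably cleaner) route than the paper's. The paper proves the lemma by Taylor expansion: it writes $g_{j}(z_{j}+\varepsilon e^{i\theta})=g_{j}(z_{j})+\varepsilon\varphi_{j}(\theta)$, reduces (after assuming WLOG that $\Re g_j(z_j),\Im g_j(z_j)>0$) the argument to an $\arctan$ of the ratio of imaginary to real parts, and differentiates that expression explicitly in $\theta$ to obtain $\mathcal{O}(\varepsilon)$. You instead use $\arg g_{j}=\Im\log g_{j}$ for a branch of the logarithm (or, as you note, the branch-independent identity $\frac{d}{d\theta}\arg g_{j}(z(\theta))=\Im\bigl(\frac{g_{j}'(z(\theta))}{g_{j}(z(\theta))}\,z'(\theta)\bigr)$), and then bound $|g_{j}'/g_{j}|$ by a constant $M_{j}$ on the fixed compact set $\overline{D}_{j}(\varepsilon_{0})$, giving $|\frac{d}{d\theta}\arg g_{j}|\le M_{j}\varepsilon$. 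What your argument buys: the estimate is manifestly $\mathcal{O}(\varepsilon)$ uniformly in $\theta$ and in $\varepsilon\le\varepsilon_{0}$, and you avoid the paper's sign normalization and explicit $\arctan$ computation (which in the printed version carries some notational slips); what the paper's buys is that it stays with real-variable calculus on the Taylor coefficients and never invokes a logarithmic derivative or branch considerations. The one point to keep explicit is the one you already flag: $g_{j}$ is zero-free on a simply connected disc slightly larger than $\overline{D}_{j}(\varepsilon)$ (since $f$ has no zeros in $D_{j}$ other than $z_{j}$ and $g_{j}(z_{j})\neq 0$), so the continuous determination of $\arg g_j$ along the circle exists and is consistent with the decomposition $\arg f=\lambda_{j}\theta+\arg g_{j}$ used in the main proof, and the compactness bound on $g_{j}'/g_{j}$ is legitimate.
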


\begin{proof}
The Taylor expansion of $g_{j}(z=z_{j}+\varepsilon e^{\theta i})$
at $z_{j}$ is given by 
\[
g_{j}(z_{j}+\varepsilon e^{\theta i})=g_{j}(z_{j})+\sum_{r=1}^{\infty}{\textstyle \frac{g_{j}^{(r)}(z_{j})}{r!}}\varepsilon^{r}e^{r\theta i}=g_{j}(z_{j})+\varepsilon\varphi_{j}(\theta).
\]
Notice that the function $\varphi_{j}$ is differentiable (with respect
to $\theta$) with bounded derivative. Set $g_{j}(z_{j})=a+bi$ and
$\varphi_{j}(\theta)=\varepsilon\alpha(\theta)+\varepsilon\beta(\theta)i$.
Without loss of generality we may assume that $a,b>0$ and hence for
small $\varepsilon$ we get $a+\varepsilon\alpha(\theta),b+\varepsilon\beta(\theta)>0$.
In particular, we can express $g_{j}(z_{j}+\varepsilon e^{\theta i})$
as 
\[
g_{j}(z_{j}+\varepsilon e^{\theta i})=\arctan({\textstyle \frac{b+\beta(\theta)}{a+\alpha(\theta)}}).
\]
Therefore
\[
\begin{alignedat}{1}{\textstyle \frac{d\arg(g_{j})}{d\theta}}(z) & ={\textstyle \frac{\varepsilon\beta'(\theta)(a+\varepsilon\alpha(\theta))-\varepsilon\alpha'(\theta)(b+\varepsilon\beta(\theta))}{(a+\varepsilon\alpha(\theta))^{2}}\times\frac{1}{1+({\textstyle \frac{b+\varepsilon\beta(\theta)}{a+\varepsilon\alpha(\theta)}})^{2}}}\\
 & =\mathcal{O}(\varepsilon)\\
 & =o(1).
\end{alignedat}
\]
\end{proof}

\section{Concluding remarks}

The condition that $\gamma$ be smooth can be weakened to piecewise smooth if required. In that case we can associate an interior angle $\beta_j$ and an interior angle $\alpha_j$ to each of the zeroes $z_j$; see the figure below. The same reasoning applies, except that an upper bound on the number of preimages of points on $L$ on each $\partial D_{j}$ is given by $\lfloor  \frac{\beta}{\pi}\lambda_j \rfloor +1$ where $\lfloor \cdot \rfloor$ is the floor function. A bit of algebra yields the following generalization of Theorem \ref{thm:Let--be}.

\begin{thm}
\label{thm:Let--be2} Let $f$ be a non zero holomorphic function defined
on an open domain $U$ and $\gamma$ be a piecewise smooth Jordan curve lying
inside $U$. Suppose the zeroes of $f$ lying on $\gamma$ are $z_1, \dots, z_k$. Let $\lambda_j$ be the multiplicity of the zero at $z_j$, and let $\alpha_j$ be the interior angle of $\gamma$ at $z_j$. Then for any line $L$ passing through the origin there exist at least 

$$
2m+\sum_{j=1}^k \lceil \lambda_j \textstyle \frac{\alpha_j}{\pi} \rceil
$$

distinct points on $\gamma$ mapped to $L$ by $f$, where
$m$ is the number of zeros of $f$ inside $\gamma$ counted according to
multiplicities and $\lceil \cdot \rceil$ denotes the ceiling function.
\end{thm}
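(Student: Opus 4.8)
The plan is to run the proof of Theorem~\ref{thm:Let--be} essentially unchanged, the only genuine modification being the angular length of the circular arc that gets spliced into $\gamma_\varepsilon$ near each zero $z_j$. So I would again choose $\varepsilon$ small enough that each disc $D_j=D_j(\varepsilon)$ lies in $U$, contains no zero of $f$ besides $z_j$, and that the curve $\gamma_\varepsilon$ obtained from $\gamma$ by replacing $\gamma\cap D_j$ with the component of $\partial D_j\setminus\gamma$ lying outside $\gamma$ is a genuine Jordan curve. As before, $f$ then has $m+\lambda$ zeroes inside $\gamma_\varepsilon$ (with $\lambda=\sum_j\lambda_j$) and none on it, so Theorem~\ref{classarg} produces at least $2m+2\lambda$ points of $\gamma_\varepsilon$ mapped to $L$ by $f$. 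Writing $f(z)=(z-z_j)^{\lambda_j}g_j(z)$ on $\overline D_j$ with $g_j(z_j)\ne 0$, the Lemma — whose statement and proof concern only $f$ on $\partial D_j$ and are completely insensitive to the nature of $\gamma$ — again gives $\bigl|\tfrac{d\arg f}{d\theta}(z_j+\varepsilon e^{\theta i})\bigr|=\lambda_j+o(1)$, so after shrinking $\varepsilon$ the argument of $f$ is strictly increasing along $\partial D_j$; hence $\partial D_j$ carries exactly $2\lambda_j$ preimages of $L$, any two consecutive ones having images with arguments differing by exactly $\pi$.

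The single new point is geometric. Since $\gamma$ has interior angle $\alpha_j$ at $z_j$, the two smooth branches of $\gamma$ emanating from $z_j$ have tangent directions there which bound the interior wedge of opening $\alpha_j$; by differentiability of these branches, the two points of $\gamma\cap\partial D_j$ converge in angle (as seen from $z_j$) to those tangent directions as $\varepsilon\to 0$, so the retained arc $\partial D_j\cap\gamma_\varepsilon$ has angular length $(2\pi-\alpha_j)+\eta_\varepsilon$ with $\eta_\varepsilon\to 0$. If $u_0,\dots,u_\ell$ are the preimages of $L$ on $\partial D_j\cap\gamma_\varepsilon$ in angular order and $\ell\ge 1$, then $\arg f(u_\ell)-\arg f(u_0)=\ell\pi$, while the mean value theorem together with $\tfrac{d\arg f}{d\theta}=\lambda_j+o(1)$ bounds this by $(\lambda_j+o(1))\bigl((2\pi-\alpha_j)+\eta_\varepsilon\bigr)$. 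Thus $\ell\pi\le\lambda_j(2\pi-\alpha_j)+c(\varepsilon)$ with $c(\varepsilon)\to 0$, and since $\ell$ is an integer this forces $\ell\le\bigl\lfloor\lambda_j\tfrac{2\pi-\alpha_j}{\pi}\bigr\rfloor$ for $\varepsilon$ small — the only care needed being the borderline case where $\lambda_j\tfrac{2\pi-\alpha_j}{\pi}$ is itself an integer, handled by taking $\varepsilon$ small enough that $c(\varepsilon)<\pi$. Hence $\partial D_j\cap\gamma_\varepsilon$ carries at most $\bigl\lfloor\lambda_j\tfrac{2\pi-\alpha_j}{\pi}\bigr\rfloor+1$ preimages of $L$ (the bound being trivial if the arc carries fewer than two).

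Finally I would assemble the count. Subtracting the arc contributions from the $2m+2\lambda$ preimages on $\gamma_\varepsilon$ leaves at least $2m+2\lambda-\sum_{j=1}^k\bigl(\lfloor\lambda_j\tfrac{2\pi-\alpha_j}{\pi}\rfloor+1\bigr)$ preimages of $L$ on $\gamma\cap\gamma_\varepsilon\subseteq\gamma$; adding back the $k$ zeroes $z_1,\dots,z_k$ themselves (each mapped to $0\in L$) and cancelling the $k$ against the $\sum_j 1$ gives at least $2m+2\lambda-\sum_{j=1}^k\lfloor\lambda_j\tfrac{2\pi-\alpha_j}{\pi}\rfloor$ distinct points of $\gamma$ mapped to $L$. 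Using $\lambda=\sum_j\lambda_j$ and the identity $\lfloor\lambda_j\tfrac{2\pi-\alpha_j}{\pi}\rfloor=\lfloor 2\lambda_j-\lambda_j\tfrac{\alpha_j}{\pi}\rfloor=2\lambda_j-\lceil\lambda_j\tfrac{\alpha_j}{\pi}\rceil$ (valid since $2\lambda_j\in\mathbb Z$), this collapses to $2m+\sum_{j=1}^k\lceil\lambda_j\tfrac{\alpha_j}{\pi}\rceil$, which is the asserted bound. I expect the main obstacle to be the geometric claim that the spliced arc subtends angle $2\pi-\alpha_j+o(1)$: this is precisely where piecewise smoothness (as opposed to mere rectifiability) of $\gamma$ enters, and it, together with checking that $\varepsilon$ can be chosen so $\gamma_\varepsilon$ stays a Jordan curve, needs to be argued carefully; everything else transfers mechanically from the proof of Theorem~\ref{thm:Let--be}.
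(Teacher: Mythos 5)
Your proposal is correct and follows exactly the route the paper sketches in its concluding remarks: rerun the proof of Theorem \ref{thm:Let--be} with the spliced arc at $z_j$ now subtending angle $2\pi-\alpha_j+o(1)$, so that each arc carries at most $\lfloor\lambda_j\frac{2\pi-\alpha_j}{\pi}\rfloor+1$ preimages of $L$, and then do the floor/ceiling algebra to land on $2m+\sum_j\lceil\lambda_j\frac{\alpha_j}{\pi}\rceil$. The only slight slip is your description of the borderline case (when $\lambda_j\frac{2\pi-\alpha_j}{\pi}$ is an integer, $c(\varepsilon)<\pi$ suffices, whereas the non-integer case needs $c(\varepsilon)$ below the gap to the next integer, which $c(\varepsilon)\to 0$ supplies anyway), which does not affect correctness.
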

\begin{figure}[H] \label{alpha}
\begin{centering}
\includegraphics[width=6cm,height=6cm,keepaspectratio]{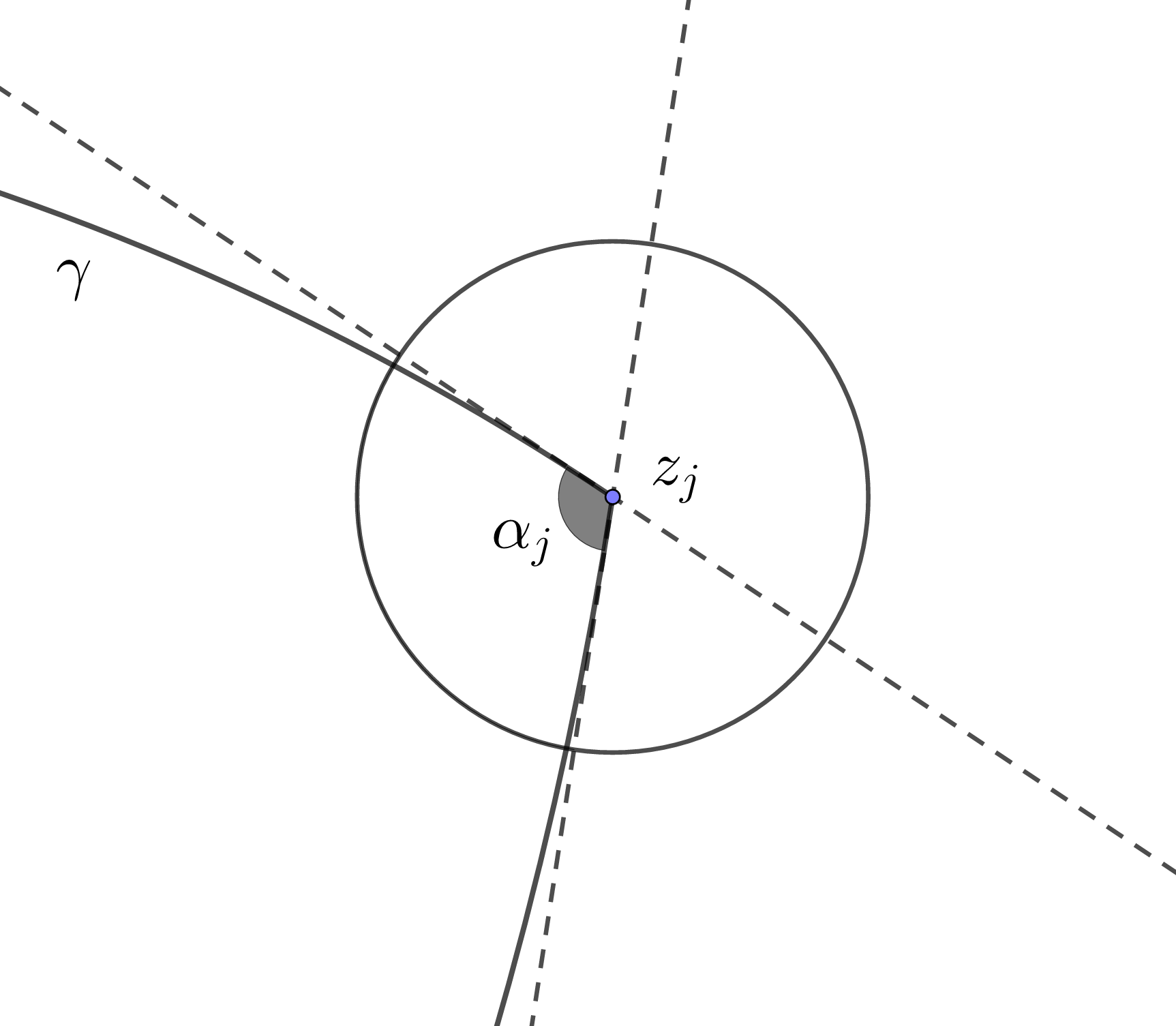}\caption{The angle $\alpha_j$ at $z_{j}$.}
\par\end{centering}
\end{figure}

\iffalse
The smoothness of $\gamma$ can be restricted only at around the points
$z_{1},...,z_{k}$. Using the same proof of our theorem, we can show
that an upper bound of the number of crossing is $2m+\lambda+2k$.
\fi

\bibliographystyle{plain}
\bibliography{Mref}
\end{document}